\newcommand{\GG}{{\cal G}}
\newcommand{\CC}{{\cal C}}
\newcommand{\PP}{{\cal P}}
\newtheorem{conjecture}{Conjecture}
\newtheorem{theorem}{Theorem}
\newtheorem{corollary}[theorem]{Corollary}
\newtheorem{lemma}[theorem]{Lemma}
\newcommand{\tw}{\mbox{tw}}
\newcommand{\bal}[2]{$#1\bullet^{#2}$}
\title{Strongly sublinear separators and polynomial expansion}
\author{Zden\v{e}k Dvo\v{r}\'ak\thanks{Computer Science Institute, Charles University, Prague, Czech Republic. E-mail: {\tt rakdver@iuuk.mff.cuni.cz}.
Supported in part by (FP7/2007-2013)/ERC Consolidator grant LBCAD no. 616787.}\and Sergey Norin\thanks{Department of Mathematics and Statistics, McGill
University. Email: {\tt snorin@math.mcgill.ca}. Supported by an NSERC Discovery grant.}}
\date{}
\begin{document}
\maketitle

\begin{abstract}
A result of Plotkin, Rao, and Smith implies that graphs with polynomial expansion
have strongly sublinear separators. We prove a converse of this result
showing that hereditary classes of graphs with strongly sublinear
separators have polynomial expansion. This confirms a conjecture of the
first author.
\end{abstract}

\section{Introduction}
The concept of graph classes with bounded expansion was introduced by Ne\v{s}et\v{r}il and Ossona de Mendez~\cite{grad1}
as a way of formalizing the notion of sparse graph classes.  Let us give a few definitions.

For a graph $G$, a \emph{$k$-minor of $G$} is any graph obtained from $G$ by contracting
pairwise vertex-disjoint subgraphs of radius at most $k$ and removing vertices and edges.  Thus, a $0$-minor is just a subgraph of $G$.
Let us define $\nabla_k(G)$ as $$\max\left\{\frac{|E(G')|}{|V(G')|}:\mbox{$G'$ is a $k$-minor of $G$}\right\}.$$
For a function $f:\mathbf{Z}_0^+\to \mathbf{R}_0^+$, we say that an expansion of a graph $G$ is \emph{bounded by $f$} if
$\nabla_k(G)\le f(k)$ for every $k\ge 0$.  We say that a class $\GG$ of graphs has \emph{bounded expansion} if there exists
a function $f:\mathbf{Z}_0^+\to \mathbf{R}_0^+$ such that $f$ bounds the expansion of every graph in $\GG$.
If such a function $f$ is a polynomial, we say that $\GG$ has \emph{polynomial expansion}.

The definition is quite general---examples of classes of graphs with boun\-ded expansion include proper minor-closed classes of graphs,
classes of graphs with bounded maximum degree, classes of graphs excluding
a subdivision of a fixed graph, classes of graphs that can be embedded
on a fixed surface with bounded number of crossings per edge and
many others, see~\cite{osmenwood}.  On the other hand, bounded expansion implies a wide range of
interesting structural and algorithmic properties, generalizing many results from proper minor-closed classes of graphs.
For a more in-depth introduction to the topic, the reader is referred to the book of Ne\v{s}et\v{r}il and Ossona de Mendez~\cite{nesbook}.

One of the useful properties of graph classes with bounded expansion is the existence of small balanced separators.
A \emph{separator} of a graph $G$ is a pair $(A,B)$ of subsets of $V(G)$ such that $A\cup B=V(G)$ and no edge joins a vertex of $A\setminus B$
with a vertex of $B\setminus A$.
The \emph{order} of the separator is $|A\cap B|$.
A separator $(A,B)$ is \emph{balanced} if $|A\setminus B|\le 2|V(G)|/3$ and $|B\setminus A|\le 2|V(G)|/3$.
Note that $(V(G),V(G))$ is a balanced separator.
For $c\ge 1$ and $0\le\beta<1$, we say that a graph $G$ has \emph{\bal{c}{\beta}-separators} if every subgraph $H$ of $G$
has a balanced separator of order at most $c|V(H)|^\beta$.
For a graph class $\CC$, let $s_\CC(n)$ denote the smallest nonnegative integer such that every graph in $\CC$ with at most $n$ vertices
has a balanced separator of order at most $s_\CC(n)$.  We say that $\CC$
has \emph{strongly sublinear separators} if there
exist $c\ge 1$ and $0<\delta\le 1$ such that $s_\CC(n)\le cn^{1-\delta}$ for every $n\ge 0$.  Note that
if $\CC$ is subgraph-closed, this implies that every graph in $\CC$ has \bal{c}{1-\delta}-separators.

Lipton and Tarjan~\cite{lt79} proved that the class $\PP$ of planar graphs satisfies $s_{\PP}(n)=O(\sqrt{n})$,
and demonstrated the importance of this fact in the design of algorithms~\cite{lt80}.
This result was later generalized to graphs embedded on other surfaces~\cite{gilbert} and all proper minor-closed
graph classes~\cite{alon1990separator,kreedsep}.
The following result by Plotkin, Rao, and Smith connects the expansion and separators.

\begin{theorem}\label{thm-plotkin}
Given a graph $G$ with $m$ edges and $n$ vertices, and integers $l$ and $h$, there is an $O(mn/l)$-time algorithm
that finds either an $(l\log_2 n)$-minor of $K_h$ in $G$, or a balanced separator of order at most
$O(n/l + lh^2\log n)$.
\end{theorem}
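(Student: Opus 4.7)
The plan is to use the classical ball-growing (cluster-growing) technique: iteratively run BFS from various roots, and at each stage either peel off a thin BFS layer to add to the separator, or grow a deep ball to serve as a branch set of the target $K_h$-minor.

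The heart of the argument is the following cluster-growing lemma, which I would prove first. Given any subgraph $H$ with $n'\le n$ vertices and a root $v\in V(H)$, BFS from $v$ in $H$ yields either (a) a BFS layer $L_i$ at depth $i\le l\log_2 n$ with $|L_i|\le |B_{i-1}|/l$, where $B_{i-1}$ is the ball of radius $i-1$, or (b) a ball $B_{l\log_2 n}$ that already equals the entire connected component of $v$ in $H$. Indeed, if (a) fails at every step, then $|B_i|\ge (1+1/l)|B_{i-1}|$ for all $i\le l\log_2 n$, so $|B_{l\log_2 n}|\ge (1+1/l)^{l\log_2 n}\ge 2^{\log_2 n}=n$, forcing case (b).

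The main algorithm iterates this: maintain a partial separator $S$ (initially empty) and a list of candidate branch sets. While some component of $G-S$ has more than $2n/3$ vertices, pick a root $v$ there and apply the lemma; case (a) contributes the thin layer to $S$ and removes the ball interior from further consideration, while case (b) records the ball as a branch set of radius $\le l\log_2 n$. If $h$ branch sets are ever collected, promote them to an $(l\log_2 n)$-minor of $K_h$ by connecting each of the $\binom{h}{2}$ pairs via a short BFS path of length $O(l\log n)$, absorbing the intermediate vertices into the separator; otherwise exit with $S$ and the resulting partition. The thin-layer contributions telescope to at most $n/l$ (the containing balls are pairwise disjoint), the pairwise-connecting paths contribute $O(lh^2\log n)$ vertices, yielding the claimed bound. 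Each BFS costs $O(m)$ and at most $O(n/l)$ rounds are executed, producing the $O(mn/l)$ running time.

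The main obstacle I anticipate is the certification and construction of the $K_h$-minor from the collected branch sets: ensuring that the $\binom{h}{2}$ pairwise-connecting paths are simultaneously short, internally disjoint from the chosen branch sets, and do not destroy the balance of the separator. This will require a more structured invariant than the bare iteration above—perhaps insisting that when a new root $v$ is selected, BFS from $v$ has already explored every previously recorded branch set to depth $\le l\log_2 n$, so that the necessary short paths exist by construction. Getting this bookkeeping right, while preserving both the $2n/3$ balance condition and the $n/l$ accounting, is the delicate step.
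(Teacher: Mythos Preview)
The paper does not prove Theorem~\ref{thm-plotkin}; it is quoted as a result of Plotkin, Rao, and Smith and used only as a black box in the proof of Corollary~\ref{cor-subsep}. There is thus no proof in the paper to compare your proposal against.

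On its own merits, your sketch has the right ingredients --- BFS ball growing, thin layers feeding the separator, the doubling estimate $(1+1/l)^{l\log_2 n}\ge n$ --- but the assembly has a real gap, exactly where you flag the main obstacle. Your case~(b) concludes that the ball of radius $l\log_2 n$ equals the \emph{entire} connected component of $v$ in the current graph $H$. Such balls cannot serve as the branch sets of a $K_h$-minor: once case~(b) fires, the whole component is consumed, so distinct case-(b) balls lie in distinct components of the working graph and no post-hoc ``connect the $\binom{h}{2}$ pairs by short paths'' step can link them. Your algorithm as written either loops forever on the single large component or terminates with at most one branch set.

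The actual Plotkin--Rao--Smith argument builds the pairwise adjacencies \emph{during} the growth rather than afterwards. One maintains supernodes $B_1,\dots,B_k$ that are already vertex-disjoint and pairwise adjacent; to grow $B_{k+1}$, BFS from a fresh vertex is halted as soon as the current ball touches every existing $B_i$. Your own doubling computation shows this must occur within depth $l\log_2 n$ unless a thin layer appears first, since otherwise the ball would have more than $n$ vertices and hence already meet every $B_i$. Each new supernode then needs to retain only one BFS path to each earlier $B_i$, so $|B_{k+1}|=O(k\,l\log n)$ and the at most $h$ supernodes together contribute the $O(lh^2\log n)$ term to the separator; no separate connecting phase is required, and the balance and running-time bookkeeping follow.
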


Using this result, Ne\v{s}et\v{r}il and Ossona de Mendez~\cite{grad2} observed that graphs with expansion bounded by subexponential function
have separators of sublinear order.  
The bound on expansion is tight in the sense that 3-regular expanders (which have exponential expansion) do not have sublinear separators.
In fact, polynomial expansion implies strongly sublinear separators, which qualitatively generalizes the results of~\cite{lt79,gilbert,kreedsep}.

\begin{corollary}\label{cor-subsep}
For any $d\ge 0$ and $k\ge 1$, there exists $c\ge 1$ and $\delta=\frac{1}{4d+3}$ such that if the expansion of a graph $G$ is bounded by $f(r)=k(r+1)^d$,
then $G$ has \bal{c}{1-\delta}-separators.  Furthermore, there exists an algorithm that returns a balanced separator of $G$ of order at
most $c|V(G)|^{1-\delta}$ in time $O(|V(G)|^\delta|E(G)|)$.
\end{corollary}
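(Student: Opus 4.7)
The strategy is to apply Theorem~\ref{thm-plotkin} to each subgraph $H$ of $G$, choosing the parameters $l$ and $h$ so that the polynomial expansion bound excludes the $K_h$-minor outcome, leaving only the separator outcome.

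Fix a subgraph $H$ of $G$ on $n_0$ vertices. Since every minor of $H$ is also a minor of $G$, the expansion of $H$ is still bounded by $f(r)=k(r+1)^d$. If the first outcome of Theorem~\ref{thm-plotkin} were to return an $(l\log_2 n_0)$-minor of $K_h$ in $H$, then
\[
 \frac{h-1}{2}=\frac{|E(K_h)|}{|V(K_h)|}\le \nabla_{l\log_2 n_0}(H)\le k(l\log_2 n_0+1)^d,
\]
so taking $h:=\lceil 2k(l\log_2 n_0+1)^d\rceil+2$ makes this option impossible. Hence Theorem~\ref{thm-plotkin} must return a balanced separator of order
\[
 O\!\left(\tfrac{n_0}{l}+lh^2\log n_0\right) \;=\; O\!\left(\tfrac{n_0}{l}+k^2\, l^{2d+1}(\log n_0)^{2d+1}\right).
\]

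Next I would balance the two terms by setting $l:=n_0^{1/(2d+2)}$, which makes both power-of-$n_0$ contributions equal to $n_0^{(2d+1)/(2d+2)}$ up to the polylog factor. A short calculation shows
\[
 \frac{4d+2}{4d+3}-\frac{2d+1}{2d+2}=\frac{2d+1}{(2d+2)(4d+3)}>0,
\]
so $1-\delta=(4d+2)/(4d+3)$ strictly exceeds $(2d+1)/(2d+2)$ and leaves room to absorb the $(\log n_0)^{2d+1}$ factor into a slightly larger power of $n_0$ for $n_0$ sufficiently large. Small values of $n_0$ are handled by the trivial separator $(V(H),V(H))$ of order $n_0$ after taking the constant $c$ large enough in terms of $k$ and $d$; this yields \bal{c}{1-\delta}-separators. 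The algorithmic statement follows from the $O(mn/l)$ running time of Theorem~\ref{thm-plotkin} with $l$ chosen as above (possibly after re-tuning $l$ to trade off separator size against running time).

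The only ``obstacle'' is the routine parameter bookkeeping and the verification that the polylog factor is absorbed; there is no serious difficulty. The conceptual content is simply that polynomial expansion limits $h$ to a quantity polynomial in $l\log n$, which makes the $K_h$-minor alternative of Theorem~\ref{thm-plotkin} unavailable for a modest~$h$ and forces a strongly sublinear balanced separator.
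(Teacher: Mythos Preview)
Your proposal is correct and takes essentially the same approach as the paper: apply Theorem~\ref{thm-plotkin} with parameters chosen so that the polynomial expansion bound rules out the $K_h$-minor alternative, leaving a strongly sublinear separator. The only difference is cosmetic---the paper sets $l=\lceil n^{\delta}\rceil$ and $h=\lceil n^{1/4-\delta/2}\rceil$ (so that $n/l=n^{1-\delta}$ and $lh^2\log n=n^{1/2}\log n$ directly), while you balance the two error terms with $l=n^{1/(2d+2)}$ and then absorb the polylog into the gap between $(2d+1)/(2d+2)$ and $1-\delta$; either bookkeeping works.
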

\begin{proof}
For any integer $n\ge 1$, let $l(n)=\lceil n^{\delta}\rceil$ and $h(n)=\lceil n^{1/4-\delta/2}\rceil$.
Since $f(l(n)\log_2 n)=O(n^{d\delta}\log^d n)$ and $1/4-\delta/2>d\delta$, there exists $n_0\ge 1$
such that $f(l(n)\log_2 n)<\frac{h(n)-1}{2}$ for every $n\ge n_0$.

Consider any subgraph $G'$ of $G$, and let $n=|V(G')|$.  Since the expansion of $G$ is bounded by $f(r)$, the expansion of
$G'$ is bounded by $f(r)$ as well.  We aim to show that $G'$ has a balanced separator of order $O(n^{1-\delta})$.
Without loss of generality, we can assume that $n\ge n_0$.
We apply Theorem~\ref{thm-plotkin} to $G'$ with $l=l(n)$ and $h=h(n)$.
Every $(l\log_2 n)$-minor of $G'$ has edge density at most $f(l\log_2 n)<\frac{h-1}{2}$, and thus $G'$ does not contain $K_h$
as an $(l\log_2 n)$-minor.  Consequently, the algorithm of Theorem~\ref{thm-plotkin} produces a balanced separator of order
$O(n/l+lh^2\log n)=O(n^{1-\delta}+n^{1/2}\log n)=O(n^{1-\delta})$.
\end{proof}

Our main result is the converse: in subgraph-closed classes, strongly sublinear separators imply polynomial expansion.

\begin{theorem}\label{thm-expansion}
For any $c\ge 1$ and $0< \delta\le 1$, there exists a function $f(r)=O\bigl(r^{5/\delta^2}\bigr)$ such that
if a graph $G$ has \bal{c}{1-\delta}-separators, then its expansion is bounded by $f$.
\end{theorem}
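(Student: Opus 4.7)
I will prove the contrapositive: suppose that $H$ is an $r$-shallow minor of $G$ with $|E(H)|/|V(H)|$ larger than the claimed polynomial bound, and deduce that some subgraph of $G$ lacks a \bal{c}{1-\delta}-separator. After passing to a subgraph of $H$ of minimum degree $d \ge |E(H)|/|V(H)|$, choose branch sets $B_v \subseteq V(G)$ of radius at most $r$ for each $v \in V(H)$, and a representative edge $e_{uv} \in E(G)$ between $B_u$ and $B_v$ for each $uv \in E(H)$. Inside each $B_v$, pick a subtree $T_v$ of a BFS-tree rooted at the centre of $B_v$, of depth at most $r$, containing every endpoint of a representative edge at $v$; then $|V(T_v)| \le 1 + r\deg_H(v)$. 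Writing $N := |V(H)|$, the subgraph $G' \subseteq G$ formed by the union of the $T_v$'s and the representative edges has $|V(G')| = O(rdN)$ and inherits the \bal{c}{1-\delta}-separator property.

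The heart of the argument is a transfer principle between separators of $G'$ and separators of $H$: for any vertex set $X \subseteq V(G')$, define $X_H := \{v \in V(H) : V(T_v) \cap X \ne \emptyset\}$. Then $|X_H| \le |X|$, and whenever $u,v \in V(H) \setminus X_H$ are $H$-adjacent, the set $T_u \cup \{e_{uv}\} \cup T_v$ is connected and disjoint from $X$, hence lies in a single connected component of $G' \setminus X$. Iterating the separator hypothesis on $G'$ until every remaining piece has at most $\tau$ vertices removes roughly $|V(G')|/\tau^{\delta}$ vertices (up to factors polynomial in $c$ and $1/\delta$), and the corresponding $X_H$ decomposes $H$ into pieces of at most $\tau$ vertices each, because each connected component of $H \setminus X_H$ corresponds to branch sets sitting in a single piece of $G' \setminus X$.

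I would then count $|E(H)| \ge dN/2$ in two ways: edges with both endpoints inside the small $H$-pieces contribute at most $N\tau/2$, whereas edges incident to $X_H$ contribute at most $|X_H| \cdot \Delta(H)$. Two obstacles arise. First, controlling $\Delta(H)$ requires a preliminary reduction — for instance, replacing any branch set centred at a high-degree vertex by a balanced branching tree inside $B_v$, which caps $\Delta(H)$ by $\text{poly}(d)$ at the cost of inflating the depth parameter from $r$ to $O(r\log d)$. Second, a $G'$-balanced separator is not automatically $H$-balanced because branch sets vary in size, which forces a secondary iteration of the separator argument and contributes another $O(1/\delta)$ factor. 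Optimising $\tau$ against the two error terms and tracking the two compounded losses in $1/\delta$ is what I expect to produce the polynomial bound with exponent $5/\delta^2$; the main technical difficulty is exactly this compounding, since each of the two sources individually is only linear in $1/\delta$ but their interaction squares it.
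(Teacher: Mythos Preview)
Your route is genuinely different from the paper's, but it has a real gap at the $\Delta(H)$ step. The inequality you are driving toward is $dN/2 \le N\tau/2 + \sum_{v\in X_H}\deg_H(v)$, and you bound the last sum by $|X_H|\cdot\Delta(H)$. The fix you propose --- ``replacing any branch set by a balanced branching tree, which caps $\Delta(H)$'' --- does not do what you claim: $\deg_H(v)$ is intrinsic to $H$ and is unaffected by how you model $B_v$ inside $G$. If instead you mean to split each high-degree vertex of $H$ into a bounded-degree tree, producing a graph $H'$ with $\Delta(H')=O(1)$, then $|E(H')|/|V(H')|=O(1)$ as well, and the large-density hypothesis that powers your entire counting argument evaporates. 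Bounded maximum degree and large edge density are in direct tension; you cannot keep both. (Your second obstacle, balance transfer, is by contrast a non-issue in your own framework: once every $G'$-piece has at most $\tau$ vertices, so does every $H$-piece, since each $H$-vertex occupies at least one $G'$-vertex.)

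The paper sidesteps this tension by replacing \emph{density} with \emph{tree-width} as the invariant that survives a degree cap. Using a nontrivial external result (Theorem~\ref{cor-iter2}), a dense $H$ contains $K_t$ as a bounded-depth minor; inside $K_t$ one takes a $3$-regular expander, which lifts to a subgraph of $G$ of maximum degree $3$ on $O_{\varepsilon}(rt)$ vertices with tree-width $\Omega(t)$. Now bounding the degree costs nothing, and comparing this tree-width against the $O(cn^{1-\delta})$ bound forced by the separator hypothesis yields $t\le r^{O(1/\delta)}$. A second pass --- showing that $r$-minors themselves inherit strongly sublinear separators, then applying the separator-to-density Lemma~\ref{lemma-dens} --- supplies the second factor of $1/\delta$. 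Your account of where $1/\delta^{2}$ arises does not match this mechanism, and your argument as written does not get off the ground without the expander/clique-minor extraction step that it omits.
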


Let us remark that a weaker variant of Theorem~\ref{thm-expansion} was conjectured by Dvo\v{r}\'ak~\cite{twd},
who hypothesized that strongly sublinear separators imply subexponential expansion, and proved this weaker claim
under the additional assumption that $G$ has bounded maximum degree.
Together with Corollary~\ref{cor-subsep}, Theorem~\ref{thm-expansion} shows the equivalence between strongly sublinear separators and polynomial expansion.

\begin{corollary}
Let $\CC$ be a subgraph-closed class.  Then $\CC$ has strongly sublinear separators if and only if $\CC$ has polynomial expansion.
\end{corollary}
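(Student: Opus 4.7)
The plan is to show that every $r$-shallow minor of $G$ has density bounded polynomially in $r$. Fix such a minor $H$, realized by disjoint branch sets $\{B_v\}_{v \in V(H)}$ of radius at most $r$, and set $n = |V(H)|$, $m = |E(H)|$, and $d = m/n$; the goal is $d = O(r^{5/\delta^2})$.

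The first step is a Steiner-tree compression of the branch sets. For each edge $vw \in E(H)$ I fix a witness $G$-edge between $B_v$ and $B_w$, and replace $B_v$ by the union in $B_v$ of shortest paths from the center of $B_v$ to the at most $\deg_H(v)$ witness endpoints in $B_v$. Each pruned branch set is a tree of radius at most $r$ on at most $r\deg_H(v)+1$ vertices, so the resulting subgraph $G^*\subseteq G$ satisfies $|V(G^*)| \leq 2rm + n$, while $H$ remains a contraction of $G^*$. Since $G^*\subseteq G$ inherits the $(c,1-\delta)$-separator property, and since the standard weighted variant (place weight $1/|B_v|$ on each vertex of $B_v$ so that each branch set carries unit weight) yields weight-balanced separators of order $O(|V(G^*)|^{1-\delta})$, one obtains a balanced separator of $H$ of order at most $c_1(rd+1)^{1-\delta}n^{1-\delta}$. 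The same argument applied to any subgraph $H'\subseteq H$, itself an $r$-shallow minor of $G$, yields the analogous bound with $n$ and $d$ replaced by the vertex count and density of $H'$.

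The next step turns this induced separator property into a density bound via the crucial test case of clique shallow minors. If $H=K_h$, then every balanced separator of $H$ has order at least $h-2$, so the induced bound gives $c_1(rh+1)^{1-\delta}h^{1-\delta} \geq h-2$; for $\delta>1/2$ this already bounds $h$ polynomially in $r$. For $\delta\leq 1/2$ one bootstraps through Theorem~\ref{thm-plotkin}: whenever $d$ is too large, Theorem~\ref{thm-plotkin} extracts a $K_h$-minor of $H$ at depth $O(l\log n)$, which lifts to a $K_h$-minor of $G$ at depth $R=O(rl\log n)$, so the clique-minor inequality applies with $r$ replaced by $R$. Iterating this bootstrap and choosing $l$ and $h$ optimally shrinks the exponent on $n$ to zero and yields $d = O(r^{5/\delta^2})$.

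The main obstacle is the coupled recursion: the separator bound for $H$ depends on $d$, which is itself what we wish to bound, so direct induction fails, and one must bootstrap through Theorem~\ref{thm-plotkin} to extract successively deeper shallow clique minors. The exponent $5/\delta^2$ reflects the nesting of two applications of the $(c,1-\delta)$-separator inequality --- once to pass from $G$ to $H$ via $G^*$, and once to handle the clique minor extracted by Theorem~\ref{thm-plotkin} --- each contributing a factor $1/\delta$ to the exponent, with the bookkeeping constants absorbed into the factor $5$.
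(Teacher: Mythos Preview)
Your plan covers only the implication from strongly sublinear separators to polynomial expansion; the converse is Corollary~\ref{cor-subsep} and should at least be cited. For the hard direction, the Steiner-tree compression of branch sets and the descent to a balanced separator of $H$ (via weighted separators, or equivalently via tree-width and Theorem~\ref{thm-septw}) are correct and match the paper's Lemma~\ref{lemma-sepmin}. The genuine gap is in how you break the circularity you yourself flag. Your claim that ``whenever $d$ is too large, Theorem~\ref{thm-plotkin} extracts a $K_h$-minor of $H$'' misreads Plotkin--Rao--Smith: that theorem returns \emph{either} a shallow $K_h$-minor \emph{or} a small balanced separator, and high edge density does not force the first outcome (a dense graph can still have an excellent separator). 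Even granting a $K_h$-minor of $G$ at some depth $R$, your ``clique-minor inequality'' after compression reads $h^{2\delta-1}\le c'R^{1-\delta}$, which is vacuous for every $\delta\le 1/2$ regardless of $R$; there is nothing to iterate, and the exponent on $n$ does not shrink.

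The paper resolves the circularity with a different, nontrivial ingredient: Theorem~\ref{cor-iter2} (from~\cite{twd}), which \emph{does} convert edge count $\ge 2\cdot 32^m t^4 n^{1+\varepsilon}$ directly into a depth-$4^m$ $K_t$-minor. Feeding a $3$-regular expander into this clique (Lemmas~\ref{lemma-exexp} and~\ref{lemma-hdtw}) produces a bounded-size subgraph of $G$ of maximum degree $3$ with tree-width at least $t/25$, contradicting Corollary~\ref{cor-septotw}. This yields the preliminary weak bound $|E(F)|\le b_{c,\delta,\varepsilon}(r)|V(F)|^{1+\varepsilon}$ of Lemma~\ref{lemma-subpolyden}, and only with this bound in hand does the Steiner compression give a useful separator exponent for $H$ (Lemma~\ref{lemma-sepmin}); Lemma~\ref{lemma-dens} then finishes. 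The missing idea in your plan is precisely this density-to-shallow-clique-minor step, and Theorem~\ref{thm-plotkin} cannot substitute for it.
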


Note that to guarantee separators of order $O(n^{1-\delta})$, Corollary~\ref{cor-subsep} only requires the expansion to be bounded by $r^{O(1/\delta)}$,
while given separators of order $O(n^{1-\delta})$, Theorem~\ref{thm-expansion} guarantees the expansion bounded by $r^{O(1/\delta^2)}$.
For a cubic graph $G$, let $G_\delta$ denote the graph obtained from $G$ by subdividing each edge exactly $\lfloor |V(G)|^{\delta/(1-\delta)}\rfloor$ times,
and let $$\CC_\delta=\{H:\text{$H\subseteq G_\delta$ for a cubic graph $G$}\}.$$  Then balanced separators in $\CC_\delta$ have order $\Omega(n^{1-\delta})$
and the expansion of $\CC_\delta$ is $r^{O(1/\delta)}$.  Hence, the relationship between the exponents in Corollary~\ref{cor-subsep}
is tight up to constant multiplicative factors.  On the other hand, we believe Theorem~\ref{thm-expansion} can be improved.

\begin{conjecture}\label{conj-lindep}
There exists $k>0$ such that for any $c\ge 1$ and $0< \delta\le 1$, if a graph $G$ has \bal{c}{1-\delta}-separators,
then its expansion is bounded by $f(r)=O\bigl(r^{k/\delta}\bigr)$.
\end{conjecture}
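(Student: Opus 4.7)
\medskip
\noindent\textbf{Proof approach.}
The plan is to attack Conjecture~\ref{conj-lindep} through the weak $r$-coloring number $\mathrm{wcol}_r(G)$: the minimum over linear orderings $\pi$ of $V(G)$ of the maximum, taken over $v\in V(G)$, of the number of vertices $u<_\pi v$ connected to $v$ by a path of length at most $r$ whose internal vertices all strictly exceed $u$ in $\pi$. By a result of Zhu, if $\mathrm{wcol}_r(G)$ is bounded by a polynomial in $r$ of degree $\alpha$, then $\nabla_r(G)$ is bounded by a polynomial of degree $O(\alpha)$, so the conjecture reduces to proving that every graph with \bal{c}{1-\delta}-separators satisfies $\mathrm{wcol}_r(G)=O(r^{k'/\delta})$ for some absolute constant $k'$ and a constant depending only on $c$ and $\delta$.

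The ordering $\pi$ is constructed by nested dissection: recursively extract a balanced separator of order at most $c|V|^{1-\delta}$, place its vertices after those of both sides in $\pi$, and iterate. This produces a binary decomposition tree $\mathcal{T}$ of depth $O(\log n)$, with each depth-$\ell$ separator of order $O\bigl((n(2/3)^\ell)^{1-\delta}\bigr)$. For a fixed vertex $v$ with ancestor separators $S_0,S_1,\ldots$ in $\mathcal{T}$, every predecessor weakly $r$-reaching $v$ must lie on some $S_\ell$, and the connecting path of length at most $r$ must stay inside the component at depth $\ell$; hence the contribution of $S_\ell$ is bounded by the number of its vertices inside the radius-$r$ ball around $v$ in that component. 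The target is to prove, by induction on $\ell$, that this count decays geometrically with a leading coefficient of $O(r^{k'/\delta})$, so that summing over $\ell$ yields the required bound on $\mathrm{wcol}_r$.

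The main obstacle is the inductive decay. The trivial estimate by the full separator size $|S_\ell|$ sums to $\Theta(n^{1-\delta})$---both too large and depending on $n$. To achieve the linear-in-$\delta^{-1}$ exponent one must, at each level, choose a separator that is sparse \emph{inside $v$'s radius-$r$ ball}, not merely inside the full component. The natural route is to apply the separator hypothesis to a vertex-weighted subgraph in which the weight concentrates on the ball, so that the resulting cut has only $O(|N_r(v)\cap V_\ell|^{1-\delta})$ vertices inside the ball; however, it is not clear that weighted balanced separators of this quality are consequences of the unweighted assumption \bal{c}{1-\delta}. Establishing such a weighted strengthening---or, alternatively, a localized extraction argument that beats the component-level bound---is the main technical difficulty I see separating Theorem~\ref{thm-expansion} from the conjectured improvement, and is precisely the step where the quadratic-in-$\delta^{-1}$ loss appears to arise in the existing proof.
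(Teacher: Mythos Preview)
The statement you are addressing is a \emph{conjecture}, not a theorem: the paper does not prove it and offers no proof to compare against. The paper's main result, Theorem~\ref{thm-expansion}, gives only the weaker bound $\nabla_r(G)=O\bigl(r^{5/\delta^2}\bigr)$, and Conjecture~\ref{conj-lindep} is explicitly left open.

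Your proposal is therefore not a proof but a plan of attack, and you correctly identify its gap yourself. The nested-dissection ordering does give a bound on $\mathrm{wcol}_r$, but the naive level-by-level count yields $\sum_\ell |S_\ell|=O(n^{1-\delta})$, which is useless. To salvage the argument you need, for each $v$ and each level $\ell$, that only $O\bigl(|B_r(v)\cap V_\ell|^{1-\delta}\bigr)$ separator vertices fall inside the ball, and this is precisely the weighted-separator statement you flag as unproved. The unweighted hypothesis \bal{c}{1-\delta} does not obviously imply it: applying the separator assumption to $G[B_r(v)\cap V_\ell]$ gives a small cut of that subgraph, but not one that is simultaneously balanced for all of $V_\ell$. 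Without this localized control, the recursion loses a factor of roughly $n^{\varepsilon}$ per level, which is exactly where the extra $1/\delta$ in the exponent creeps in (compare Lemma~\ref{lemma-subpolyden} and Lemma~\ref{lemma-sepmin} in the paper, where the passage through an auxiliary $\varepsilon$ forces the $5/\delta^2$). So your diagnosis of the obstacle is accurate, but the proposal as written does not overcome it and hence does not settle the conjecture.
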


The rest of the paper is devoted to the proof of Theorem~\ref{thm-expansion}.  In Section~\ref{sec-expand}, we recall some results
relating separators with tree-width and expanders.
In Section~\ref{sec-dens}, we give partial results towards bounding the density of minors of graphs with strongly sublinear separators. 
In Section~\ref{sec-sep}, we show that a bounded-depth minor of a graph with strongly sublinear separators still has strongly sublinear separators (for a somewhat
worse bound on their order).  Finally, in Section~\ref{sec-poly}, we combine these results to give a proof of
Theorem~\ref{thm-expansion}.

\section{Separators, tree-width and expanders}\label{sec-expand}

For $\alpha>0$, a graph $G$ is an \emph{$\alpha$-expander} if for every $A\subseteq V(G)$ of size at most $|V(G)|/2$, there
exist at least $\alpha|A|$ vertices of $V(G)\setminus A$ adjacent to a vertex of $A$.  Random graphs are asymptotically almost surely expanders.

\begin{lemma}[Kolesnik and Wormald~\cite{kolesnik2014lower}]\label{lemma-exexp}
There exists an integer $n_0$ such that for every even $n\ge n_0$, there exists a $3$-regular $\frac{1}{7}$-expander on $n$ vertices.
\end{lemma}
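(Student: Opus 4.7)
The plan is to apply the probabilistic method via the configuration model. For even $n$, generate a random 3-regular multigraph $G$ on $n$ vertices by taking a uniformly random perfect matching of $3n$ half-edges, three attached to each vertex. It is classical (Bollob\'as) that $G$ is simple with probability tending to $e^{-2}$, so it suffices to show that the random configuration is a $\tfrac{1}{7}$-expander with probability $1-o(1)$; any typical outcome is then simultaneously simple and a $\tfrac{1}{7}$-expander for all sufficiently large even $n$.

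For disjoint subsets $A,B\subseteq V$ with $|A|=a\le n/2$ and $|B|=b$, the bad event $N(A)\setminus A\subseteq B$ is the event that every half-edge at $A$ is matched within the $3(a+b)$ half-edges at $A\cup B$. Its probability has an exact configuration-model expression as a sum over $k$ (the number of internal $A$-pairs) of ratios of double factorials of the form
\[
\sum_k \binom{3a}{2k}(2k-1)!!\binom{3b}{3a-2k}(3a-2k)!\,\frac{(3n-6a+2k-1)!!}{(3n-1)!!}.
\]
Taking $b=\lceil a/7\rceil-1$ and union-bounding over the $\binom{n}{a}\binom{n-a}{b}$ pairs $(A,B)$ of these sizes, Stirling's formula reduces the expected number of violations arising from sets of size $a=xn$ to an expression of the form $\exp\bigl(n\psi(x)+o(n)\bigr)$, where $\psi$ is an explicit single-variable function obtained by optimizing over the sum index.

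The decisive step, and main obstacle, is verifying that $\psi(x)<0$ for all $x\in(0,1/2]$; this is the inequality that pins down the specific expansion constant $\tfrac{1}{7}$ (as opposed to some smaller factor), and would be carried out by calculus on a concrete combination of binary entropies and logarithms. Granting this inequality, summing $\exp\bigl(n\psi(a/n)+o(n)\bigr)$ over $a\in\{1,\dots,n/2\}$ yields an $o(1)$ expected number of violating sets, so by Markov's inequality the random configuration is a $\tfrac{1}{7}$-expander with probability tending to $1$. Since the probability of simplicity tends to $e^{-2}>0$, both events occur simultaneously for all sufficiently large even $n$, and any configuration in their intersection is the required $3$-regular $\tfrac{1}{7}$-expander.
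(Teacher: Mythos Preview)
The paper does not prove this lemma at all: it is quoted as a known result of Kolesnik and Wormald and is used as a black box. So there is nothing to compare your argument against on the paper's side; any self-contained proof you supply goes beyond what the paper does.

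As for your sketch itself, the configuration-model first-moment argument you outline is indeed the standard route to such statements, and the structure (probability of $N(A)\setminus A\subseteq B$ via matchings, union bound over $(A,B)$, Stirling to get $\exp(n\psi(x)+o(n))$) is correct in outline. The genuine gap is exactly where you flag it: you never verify that $\psi(x)<0$ on $(0,1/2]$, and this is the entire content determining the constant $\tfrac{1}{7}$. Without that calculation the argument proves nothing about the specific expansion factor, only that \emph{some} positive $\alpha$ works (which was already known long before Kolesnik--Wormald). A second point to watch: the cited result of Kolesnik and Wormald concerns the edge isoperimetric number, whereas the lemma here is stated for vertex expansion; one needs either to translate their edge bound into a vertex bound via the trivial inequality (each neighbour of $A$ absorbs at most three boundary edges) and check that the resulting constant is at least $\tfrac{1}{7}$, or to run the first-moment computation directly for vertex expansion as you propose. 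Either way, the numerical verification is not optional.
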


Let us recall a well-known fact on the relationship between tree-width and separators, see e.g.~\cite{rs2}.

\begin{lemma}\label{lemma-wtsep}
Any graph $G$ has a balanced separator of order at most $\tw(G)+1$.
\end{lemma}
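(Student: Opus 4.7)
The plan is to work with a tree decomposition of $G$ directly. Fix an optimal tree decomposition $(T, \{X_t\}_{t \in V(T)})$ of $G$, so $|X_t| \le \tw(G)+1$ for every $t \in V(T)$. Root $T$ at an arbitrary node. For each $v \in V(G)$, the set $T_v := \{t \in V(T) : v \in X_t\}$ is a nonempty connected subtree of $T$; let $r(v)$ denote the node of $T_v$ closest to the root. Setting $w(t) := |\{v \in V(G) : r(v) = t\}|$ yields a weight function on $V(T)$ with $\sum_{t \in V(T)} w(t) = n := |V(G)|$.

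Next I would invoke the standard weighted centroid argument to find $t^* \in V(T)$ such that every component $C$ of $T - t^*$ satisfies $\sum_{t \in C} w(t) \le n/2$. Orient each edge $\{t,t'\}$ of $T$ toward $t'$ whenever the component of $T-t$ containing $t'$ has weight exceeding $n/2$; since the component weights across a given node sum to at most $n$, at most one incident edge of any node is oriented outward, and since at most one direction is chosen per edge, a counting argument ($|V(T)|$ vs.\ $|V(T)|-1$ directed edges) forces some node to have out-degree zero. Any such node is the desired centroid.

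The main claim is then that $X_{t^*}$ is a balanced separator of $G$. For each component $C$ of $T - t^*$, set $A_C := \bigl(\bigcup_{t \in C} X_t\bigr) \setminus X_{t^*}$. The standard properties of tree decompositions ensure that the sets $A_C$ are pairwise disjoint, that their union is $V(G) \setminus X_{t^*}$, and that no edge of $G$ joins vertices in two distinct $A_C$'s. If $v \in A_C$, then $v \notin X_{t^*}$ and hence $t^* \notin T_v$, so the connected subtree $T_v$ lies entirely within $C$; in particular $r(v) \in C$. Consequently $|A_C| \le \sum_{t \in C} w(t) \le n/2$.

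To finish, I would distribute the components of $T - t^*$ into two classes $S_1, S_2$ with $\sum_{C \in S_i} |A_C| \le 2n/3$. If some $|A_C|$ exceeds $n/3$, place it alone in one class; the others then sum to at most $2n/3$. Otherwise every $|A_C| \le n/3$, and a greedy ``add to the lighter class'' scheme yields two classes differing by at most $n/3$ in total weight, hence each of total weight at most $n/2 + n/6 = 2n/3$. Then $\bigl(X_{t^*} \cup \bigcup_{C \in S_1} A_C,\ X_{t^*} \cup \bigcup_{C \in S_2} A_C\bigr)$ is a balanced separator of order $|X_{t^*}| \le \tw(G)+1$. The only delicate step is the bound $|A_C| \le n/2$ via the homing assignment $r(\cdot)$; everything else is routine bookkeeping.
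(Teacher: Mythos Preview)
Your argument is correct and is the standard centroid-in-tree-decomposition proof. Note, however, that the paper does not actually prove this lemma: it is stated as a well-known fact with a citation to Robertson and Seymour~\cite{rs2}, so there is no in-paper proof to compare against. Your write-up would serve perfectly well as a self-contained justification; the only cosmetic point is that the greedy/large-piece split at the end is itself folklore and could be cited rather than redone, but spelling it out does no harm.
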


\begin{corollary}\label{cor-twexp}
If $H$ is an $\alpha$-expander for some $\alpha>0$, then $\tw(H)\ge \frac{\alpha}{3(1+\alpha)}|V(H)|-1$.
\end{corollary}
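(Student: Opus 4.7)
The plan is to combine Lemma~\ref{lemma-wtsep} with the expansion hypothesis. By Lemma~\ref{lemma-wtsep}, $H$ admits a balanced separator $(A,B)$ of order at most $\tw(H)+1$. Setting $S=A\cap B$, $X=A\setminus B$, and $Y=B\setminus A$, we get a partition of $V(H)$ with no edge between $X$ and $Y$, with $|X|,|Y|\le 2|V(H)|/3$, and with $|S|\le\tw(H)+1$.

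Without loss of generality I would assume $|X|\le|Y|$, which forces $|X|\le|V(H)|/2$, so the expansion hypothesis applies to $X$: there are at least $\alpha|X|$ vertices in $V(H)\setminus X$ adjacent to $X$. Since no edge leaves $X$ to $Y$, all these neighbors lie in $S$, so $|S|\ge\alpha|X|$. To convert this into a bound in terms of $n=|V(H)|$, I would use $|X|=n-|S|-|Y|\ge n/3-|S|$ (from $|Y|\le 2n/3$), yielding $|S|\ge\alpha(n/3-|S|)$, i.e.\ $|S|\ge\frac{\alpha n}{3(1+\alpha)}$. Combined with $|S|\le\tw(H)+1$, this gives the claimed inequality.

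There is essentially no obstacle: the only thing to be careful about is ensuring that the set on which the expansion condition is invoked has size at most $|V(H)|/2$, which is automatic after swapping $A$ and $B$ if necessary. The whole argument is a short chain of inequalities once the separator is produced.
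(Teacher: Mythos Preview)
Your proof is correct and essentially identical to the paper's: both apply Lemma~\ref{lemma-wtsep} to get a balanced separator, assume WLOG that the smaller side has at most $|V(H)|/2$ vertices, use expansion to bound $|S|\ge\alpha|X|$, and combine with $|X|+|S|\ge|V(H)|/3$ (from $|Y|\le 2|V(H)|/3$) to obtain $|S|\ge\frac{\alpha}{3(1+\alpha)}|V(H)|$. The only difference is notation and a trivial algebraic rearrangement.
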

\begin{proof}
Let $(A,B)$ be a balanced separator of $H$ of order at most $\tw(H)+1$.  Let $S=A\cap B$, let $A'=A\setminus B$ and let $B'=B\setminus A$.
Without loss of generality, $|A'|\le |B'|$, and thus $|A'|\le |V(H)|/2$.  Since $H$ is an $\alpha$-expander, we have $|S|\ge\alpha |A'|$, and thus $|A'|\le \frac{1}{\alpha}|S|$.
On the other hand, since the separator $(A,B)$ is balanced, we have $|B'|\le \frac{2}{3}|V(H)|$, and thus $|A'|+|S|\ge \frac{1}{3}|V(H)|$.
Therefore,
\begin{align*}
\left(\frac{1}{\alpha}+1\right)|S|&\ge \frac{1}{3}|V(H)|\\
|S|&\ge \frac{\alpha}{3(1+\alpha)}|V(H)|.
\end{align*}
The claim follows, since $|S|\le \tw(H)+1$.
\end{proof}

For later use, let us remark that an approximate converse to Lemma~\ref{lemma-wtsep} holds, as was proved by
Dvo\v{r}\'ak and Norin~\cite{dnorin}.

\begin{theorem}\label{thm-septw}
If every subgraph of $G$ has a balanced separator of order at most $k$, then $G$ has tree-width at most $105k$.
\end{theorem}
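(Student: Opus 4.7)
My plan is to build a tree decomposition of $G$ recursively using the given balanced separators, carefully tracking an \emph{interface} set so as to avoid the $\log n$ factor that a naive divide-and-conquer construction would incur. Concretely, I would prove by induction on $|V(H)|$ the following strengthening: there are constants $\alpha<\gamma\le 105$ such that for every subgraph $H$ of $G$ and every $W\subseteq V(H)$ with $|W|\le \alpha k$, the graph $H$ admits a tree decomposition of width at most $\gamma k$ in which some bag contains $W$. Applying this to $H=G$ and $W=\emptyset$ then yields $\tw(G)\le 105k$. The base case $|V(H)|\le \gamma k+1$ is handled by the single-bag decomposition.

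For the inductive step I would use the hypothesis to extract a set $S\subseteq V(H)$ of order $O(k)$ from (iterated applications of) balanced separators and place $W\cup S$ into the root bag. The goal is to arrange that each connected component $C$ of $H-(W\cup S)$ (i) has strictly fewer vertices than $H$, so that the recursion terminates, and (ii) inherits a new interface $(W\cup S)\cap N_H[V(C)]$ of size at most $\alpha k$, so that the inductive hypothesis applies. One then recurses on each component $C$ with its inherited interface, attaching the resulting sub-decompositions to the root bag along the shared vertex sets.

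The core technical difficulty, and where I expect most of the work to lie, is in producing $S$ with $|S|=O(k)$ rather than $O(k\log n)$. A naive iteration---repeatedly applying the hypothesis to the \emph{heavy} component that still contains more than $2|W|/3$ vertices of $W$---halves that component's vertex count at each step but need not halve its $W$-content, so it might accumulate up to $\log n$ separators before the interfaces become balanced. The way to avoid this is to interleave two actions at each iteration: if the heavy component has fewer than $(\gamma-\alpha)k$ vertices, absorb it \emph{wholesale} into the root bag (the bag-size budget still allows this); otherwise apply another balanced separator from the hypothesis. Since absorption kicks in as soon as the heavy component is small, and the hypothesis shrinks it by a factor $2/3$ at every other iteration, only $O(1)$ rounds are needed before either the $W$-balance condition is met automatically or the component has been swallowed by the bag; this keeps $|S|=O(k)$. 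Choosing the constants $\alpha$ and $\gamma$ so that the bag-size bound $|W|+|S|+(\text{absorbed vertices})\le \gamma k$, the new-interface bound $2|W|/3+|S|\le \alpha k$, and the absorption threshold are simultaneously consistent is a routine but delicate bookkeeping exercise that drives the final constant $105$.
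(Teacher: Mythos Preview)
First, note that the present paper does not prove Theorem~\ref{thm-septw}: it is quoted from~\cite{dnorin} and used as a black box, so there is no in-paper proof to compare your proposal against.

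As for the proposal itself, there is a genuine gap. The recursive scheme with a bounded interface $W$ is the standard route from separator bounds to tree-width bounds, and it works directly when one has separators that balance a \emph{prescribed} set $W$. The hypothesis here, however, only supplies separators that balance the full vertex set, and your mechanism for converting one into the other does not do what you claim. Your fix is to iterate on the heavy component (the one carrying more than $2|W|/3$ of $W$) and to absorb it into the root bag once its total size drops below $(\gamma-\alpha)k$; you then assert that this terminates in $O(1)$ rounds. It does not: each application of a balanced separator shrinks the heavy component's \emph{vertex count} by a factor $2/3$, but that component starts with up to $|V(H)|$ vertices, so reaching the absorption threshold $(\gamma-\alpha)k$ takes $\Theta\bigl(\log(|V(H)|/k)\bigr)$ rounds in the worst case. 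Nothing in the hypothesis forces the $W$-content to rebalance along the way---a balanced separator of $H'$ balances $|V(H')|$, not $W\cap V(H')$, so all of $W$ can remain on one side at every step---and hence the ``$W$-balance is met automatically'' alternative need never occur. The accumulated separator $S$ then has size $\Theta(k\log n)$, and your construction yields only $\tw(G)=O(k\log n)$, the classical easy bound, rather than the linear bound $\tw(G)\le 105k$. Eliminating this logarithmic factor is precisely the content of~\cite{dnorin}, and it requires a genuinely different idea from the interface recursion you outline; the constant $105$ is not the output of the bookkeeping you describe.
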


\begin{corollary}\label{cor-septotw}
For $c\ge 1$ and $0\le\beta<1$, if a graph $G$ has \bal{c}{\beta}-separators, then every subgraph $H$ of $G$ has tree-width
at most $105c|V(H)|^\beta$.
\end{corollary}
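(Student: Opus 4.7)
The plan is to apply Theorem~\ref{thm-septw} directly to each subgraph $H$ of $G$, with a parameter $k$ chosen uniformly in terms of $|V(H)|$. Since the hypothesis of Theorem~\ref{thm-septw} asks for \emph{every} subgraph of the target graph to have a balanced separator of order at most $k$, the only thing to check is that when we restrict attention to subgraphs of $H$, the separator bound inherited from $G$ can be replaced by a single number depending on $|V(H)|$.

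Concretely, I would fix a subgraph $H$ of $G$ and set $n=|V(H)|$, aiming to show $\tw(H)\le 105cn^\beta$. Any subgraph $H'$ of $H$ is also a subgraph of $G$, so the assumption that $G$ has \bal{c}{\beta}-separators supplies a balanced separator of $H'$ of order at most $c|V(H')|^\beta$. Since $\beta\ge 0$ and $|V(H')|\le n$, monotonicity of $x\mapsto x^\beta$ gives the uniform upper bound $c|V(H')|^\beta\le cn^\beta$. Thus every subgraph of $H$ admits a balanced separator of order at most $k\colonequals cn^\beta$.

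With this uniform bound in hand, Theorem~\ref{thm-septw} applied to $H$ delivers $\tw(H)\le 105k=105cn^\beta$, which is the desired conclusion. There is no real obstacle: the corollary is a straightforward specialization of Theorem~\ref{thm-septw} to the polynomial separator function $cn^\beta$, and the only (minor) point worth stating is the monotonicity observation that lets one pass from the subgraph-wise bound to a single value of $k$.
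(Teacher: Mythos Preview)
Your proof is correct and is exactly the argument the paper has in mind: the corollary is stated without proof immediately after Theorem~\ref{thm-septw}, and the intended derivation is precisely to apply that theorem to a given subgraph $H$ with the uniform bound $k=c|V(H)|^\beta$, using monotonicity of $x\mapsto x^\beta$ to cover all subgraphs of $H$. There is nothing to add.
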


The aim of this section is to argue that in a dense graph, we can always find a large expander of maximum degree $3$ as a subgraph.  
To prove this, we first find a bounded-depth clique minor, using the following result of Dvo\v{r}\'ak~\cite{twd}.

\begin{theorem}\label{cor-iter2}
Suppose that $0<\varepsilon\le 1$ and let $m=\left\lceil\frac{1}{2\varepsilon^2}\right\rceil$.
If a graph on $n$ vertices has at least $2\cdot 32^mt^4n^{1+\varepsilon}$ edges,
then it contains $K_t$ as a $4^m$-minor.
\end{theorem}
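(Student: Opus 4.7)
The plan is to iterate an amplification step $m$ times, where each step contracts carefully chosen pairwise vertex-disjoint radius-$1$ subgraphs of the current minor. The key structural observation is that such contractions compose well with the bounded-depth minor operation: if $G^{(i)}$ is a $4^i$-minor of the original graph $G$, obtained via branch sets each of radius at most $4^i$ in $G$, and $G^{(i+1)}$ is obtained from $G^{(i)}$ by contracting pairwise vertex-disjoint connected subgraphs of $G^{(i)}$ of radius at most $1$, then $G^{(i+1)}$ is a $4^{i+1}$-minor of $G$. Indeed, a new branch set is the union of a ``center'' branch set $B_{v^*}$ together with some neighbor branch sets reached via single bridging edges of $G$, and any vertex of such a union has distance at most $2\cdot 4^i + 1 + 4^i = 3\cdot 4^i + 1 \le 4^{i+1}$ from the center of $B_{v^*}$ in $G$ (diameter of a neighbor branch set, bridging edge, plus radius of $B_{v^*}$). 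Thus $m$ successive radius-$1$ contractions starting from $G$ yield a $4^m$-minor of $G$.

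The single-step amplification lemma I would aim for reads as follows: if $H$ has $N$ vertices and at least $32 \cdot 2 t^4 N^{1+\eta}$ edges with $\eta>0$, then either $H$ already contains $K_t$ as a minor, or some radius-$1$ contraction $H'$ of $H$ has $N'$ vertices and at least $2 t^4 (N')^{1+\eta'}$ edges, where $\eta'$ is smaller than $\eta$ by a controlled amount (calibrated so that $m = \lceil 1/(2\varepsilon^2)\rceil$ iterations drive the exponent down to zero). I would prove this by (i) passing to a subgraph of $H$ with minimum degree at least half the average degree; (ii) bucketing vertices by the size of their closed neighborhoods and using pigeonhole to extract a large set $S$ of vertices with roughly equal-sized neighborhoods; (iii) greedily selecting a large vertex-disjoint family of stars centered at vertices of $S$ and contracting each star, reducing $N$ substantially while preserving a constant fraction of the edges, and tracking how the density exponent shifts.

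The iteration then unrolls cleanly. Starting with $G^{(0)} = G$, which has at least $2\cdot 32^m t^4 n^{1+\varepsilon}$ edges, apply the amplification lemma to $G^{(i)}$ at each step. If the lemma ever yields a $K_t$ minor of the current $4^i$-minor $G^{(i)}$, this is a fortiori a $4^m$-minor of $G$ and we are done. Otherwise, after $m$ steps we obtain a $4^m$-minor $G^{(m)}$ with at least $2 t^4 N_m^{1+\eta_m}$ edges and $\eta_m \le 0$, that is, average degree at least $4 t^4$. A classical minor-density theorem (Mader, or the Kostochka--Thomason strengthening) then guarantees $K_t$ as a minor of $G^{(m)}$, which lifts to $K_t$ as a $4^m$-minor of $G$.

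The principal obstacle is establishing the amplification lemma with the exact quantitative trade-off. The factor of $32$ absorbed per step must pay for a decrement on $\eta^2$ large enough (of order $\varepsilon^2$) that $\lceil 1/(2\varepsilon^2)\rceil$ iterations wipe out the initial $\varepsilon$. Identifying the right pigeonhole bucketing (likely of logarithmic width in $N$) and the right family of radius-$1$ subgraphs to contract in order to realize this trade-off is the combinatorial heart of the argument; once the single step is set up correctly, the composition of minors and the termination argument above are essentially bookkeeping.
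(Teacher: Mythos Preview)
The paper does not prove this statement; it is quoted from Dvo\v{r}\'ak~\cite{twd} and used as a black box, so there is no in-paper proof to compare against.

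Your outline nevertheless has a genuine gap, and it appears twice. In the amplification branch ``$H$ already contains $K_t$ as a minor'' and again in the terminal step where you invoke Kostochka--Thomason on $G^{(m)}$, you assert that the resulting $K_t$ lifts to a $4^m$-minor of $G$. This does not follow: an arbitrary minor of a $4^i$-minor of $G$ carries no radius bound on its branch sets once lifted back to $G$, so it need not be a shallow minor of $G$ at all. Note also that $G$ itself already has average degree at least $4\cdot 32^m t^4 n^{\varepsilon}\ge 4t^4$, so driving $\eta$ down to $0$ gains nothing if the endgame is merely ``average degree $\ge 4t^4$, then Kostochka--Thomason''.

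The repair is to run the density drift in the opposite direction. Each radius-$1$ contraction should retain a constant fraction of the edges while shrinking the vertex count by a polynomial factor, so that the exponent $\eta$ in $|E|\ge c\,N^{1+\eta}$ \emph{increases} from step to step; the factor $32^m$ in the hypothesis pays for the accumulated edge losses over $m$ rounds. The iteration terminates when the current graph is so dense relative to its order that $K_t$ is forced as a \emph{subgraph} of $G^{(i)}$, and a subgraph of a $4^i$-minor is automatically a $4^i$-minor (hence a $4^m$-minor) of $G$. Your radius-composition bookkeeping $3\cdot 4^i+1\le 4^{i+1}$ and the overall $m$-round architecture are fine; it is the direction of the amplification and the termination criterion that need to change.
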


Next, we take a $3$-regular expander subgraph of this clique which exists by Lemma~\ref{lemma-exexp}, and we
observe that it corresponds to a subgraph of the original graph of maximum degree $3$, in which each path of vertices of degree two
has bounded length.  Such a subgraph is still a decent expander.  However, we will only need the corresponding lower bound for the tree-width of such
a graph. 

\begin{lemma}\label{lemma-hdtw}
Suppose that $0<\varepsilon\le 1$ and let $m=\left\lceil\frac{1}{2\varepsilon^2}\right\rceil$.
Let $n_0$ satisfy Lemma~\ref{lemma-exexp} and let $t\ge \max(n_0,600)$ be an even integer.
If a graph on $n$ vertices has at least $2\cdot 32^mt^4n^{1+\varepsilon}$ edges,
then it contains a subgraph $H$ of maximum degree $3$ with $|V(H)|\le 4^{m+1}t$ and with
$\tw(H)\ge \frac{t}{25}$.
\end{lemma}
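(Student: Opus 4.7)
The plan is to convert the edge-count hypothesis into a shallow $K_t$-minor via Theorem~\ref{cor-iter2}, realize a cubic expander $R$ inside the resulting branch sets using short Steiner trees, and then read off the tree-width bound from Corollary~\ref{cor-twexp}.

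First, since the edge-count hypothesis is precisely the one needed by Theorem~\ref{cor-iter2}, it supplies pairwise vertex-disjoint connected subgraphs $B_1,\ldots,B_t$ of $G$ of radius at most $4^m$ and, for every pair $i\neq j$, a chosen edge $f_{ij}$ joining $B_i$ to $B_j$. Since $t$ is even and $t\geq n_0$, Lemma~\ref{lemma-exexp} supplies a $3$-regular $\frac{1}{7}$-expander $R$ on the vertex set $\{1,\ldots,t\}$.

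The bulk of the work is embedding $R$ into $G$ while controlling the maximum degree. For each $i$, let $P_i\subseteq V(B_i)$ consist of the endpoints inside $B_i$ of the edges $f_{ij}$ with $ij\in E(R)$, so $|P_i|\leq 3$. Inside a BFS tree of $B_i$ of depth at most $4^m$, let $T_i$ be the minimal subtree containing $P_i$. Then every leaf of $T_i$ lies in $P_i$, so $T_i$ has at most three leaves, its maximum degree is at most $3$, and $|V(T_i)|$ is bounded by the total length of the three root-to-port paths, giving $|V(T_i)|\leq 3\cdot 4^m+1\leq 4^{m+1}$. I define $H$ by
\[ V(H)=\bigcup_{i=1}^{t} V(T_i),\qquad E(H)=\bigcup_{i=1}^{t} E(T_i)\;\cup\;\{f_{ij} : ij\in E(R)\}, \]
so that $|V(H)|\leq 4^{m+1}t$ and $R$ is a minor of $H$ (contracting each $T_i$ to a point). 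Corollary~\ref{cor-twexp} with $\alpha=1/7$ then yields
\[ \tw(H)\;\geq\; \tw(R)\;\geq\; \frac{1/7}{3(1+1/7)}\,t-1\;=\;\frac{t}{24}-1\;\geq\;\frac{t}{25}, \]
the final inequality using the hypothesis $t\geq 600$.

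The one delicate point is checking that the maximum degree of $H$ is actually at most $3$, since a single vertex of $B_i$ may serve as the port of two or three of the exit edges simultaneously. This is handled by a short case analysis on $|P_i|\in\{1,2,3\}$: if $|P_i|=1$ one takes $T_i$ to be the single port (no internal edges); if $|P_i|=2$ a shortest path between the two ports (contributing degree $1$ to each port); if $|P_i|=3$ the Steiner tree described above (each port a leaf of $T_i$). In every case a port carrying $k$ exit edges has at most $3-k$ edges of $T_i$ incident to it, so its total degree in $H$ is at most $3$; non-port vertices inherit the bound from $T_i$.
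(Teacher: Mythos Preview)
Your proof is correct and follows essentially the same approach as the paper's: apply Theorem~\ref{cor-iter2} for a shallow $K_t$-minor, pass to a $3$-regular $\tfrac{1}{7}$-expander $R$ on $t$ vertices via Lemma~\ref{lemma-exexp}, realize $R$ as a minor of a max-degree-$3$ subgraph $H$ of $G$ with $|V(H)|\le (3\cdot 4^m+1)t\le 4^{m+1}t$, and finish with Corollary~\ref{cor-twexp} and the inequality $\tfrac{t}{24}-1\ge\tfrac{t}{25}$ for $t\ge 600$. The paper compresses the construction of $H$ into a single sentence, whereas you spell out the Steiner-tree construction inside each branch set and the port-degree check explicitly; your key inequality $\deg_{T_i}(p)\le 3-k$ (for a port $p$ carrying $k$ exit edges) is exactly what is needed, since removing $p$ from $T_i$ creates $\deg_{T_i}(p)$ components each containing one of the at most $3-k$ remaining ports.
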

\begin{proof}
By Theorem~\ref{cor-iter2}, $G$ contains $K_t$ as a $4^m$-minor, and by Lemma~\ref{lemma-exexp}, $G$ contains a $3$-regular $\frac{1}{7}$-expander $H_0$ with $t$ vertices
as a $4^m$-minor.  Hence, $G$ contains a subgraph $H$ of maximum degree three such that $|V(H)|\le (3\cdot 4^m+1)|V(H_0)|\le 4^{m+1}t$ and $H_0$ is a minor of $H$.
By Corollary~\ref{cor-twexp}, $\tw(H)\ge\tw(H_0)\ge \frac{1}{24}|V(H_0)|-1=\frac{t}{24}-1\ge \frac{t}{25}$.
\end{proof}

\section{The densities of graphs with strongly sublinear separators and their minors}\label{sec-dens}

In this section, we give two bounds on edge densities of graphs.  Firstly, we show that graphs with strongly sublinear
separators have bounded edge density; in other words, they satisfy the condition from the definition of bounded expansion
for $\nabla_0$.

\begin{lemma}\label{lemma-dens}
For any $c\ge 1$ and $0<\delta\le 1$, let $a_\delta(c)\ge 1$ be the unique real number satisfying
$a_\delta(c)^\delta=4c\log^2 (ea_\delta(c))$.  If $G$ has \bal{c}{1-\delta}-separators, then $G$ has at most $a_\delta(c)|V(G)|$ edges.
\end{lemma}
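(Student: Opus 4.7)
The plan is to proceed by contradiction: suppose $|E(G)| > a|V(G)|$ with $a := a_\delta(c)$ defined implicitly by $a^\delta = 4c\log^2(ea)$.  By the standard degeneracy argument --- iteratively removing vertices of degree strictly less than $a$ --- we would either account for at most $a|V(G)|$ edges in total (a contradiction) or arrive at a nonempty subgraph $H \subseteq G$ with minimum degree at least $a$.  Since $H$ is a subgraph of $G$, it still has \bal{c}{1-\delta}-separators, and by Corollary~\ref{cor-septotw} its tree-width is at most $105c|V(H)|^{1-\delta}$.

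Next, I would apply balanced separators to $H$ iteratively, producing a nested chain $H = H_0 \supseteq H_1 \supseteq \cdots$: given $H_i$, take a balanced separator $(A_i, B_i)$ of $H_i$ (regarded as a subgraph of $G$, so that $|A_i \cap B_i| \le c|V(H_i)|^{1-\delta}$) and set $H_{i+1} := H_i[A_i\setminus B_i]$ on the smaller side.  Then $|V(H_{i+1})| \le |V(H_i)|/2$, and because no edge of $H_i$ crosses from $A_i\setminus B_i$ to $B_i\setminus A_i$, each vertex of $H_{i+1}$ loses at most $|A_i\cap B_i|$ neighbours when passing from $H_i$ to $H_{i+1}$.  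Summing the resulting geometric series bounds the cumulative minimum-degree loss across all iterations by $O_\delta(c|V(H_0)|^{1-\delta})$.

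The main obstacle is that pushing this iteration naively all the way down to a single-vertex residual only yields a bound of the form $a = O_\delta(c|V(H_0)|^{1-\delta})$, which is $|V(G)|$-dependent rather than a constant.  To close the argument into $a^\delta = 4c\log^2(ea)$, I would stop the iteration at a carefully chosen level where $|V(H_k)|$ is still nontrivial, invoke the trivial bound $|E(H_k)| \le \binom{|V(H_k)|}{2}$, and balance the stopping rule so that one $\log(ea)$ factor arises from the number of iterations needed to make the residual graph comparable in size to $a$, while a second $\log(ea)$ factor arises from the density drop accumulated along the way.  Pinning these two logarithmic contributions down simultaneously with the constant $4c$ --- so that all constants line up precisely with the defining equation of $a_\delta(c)$ --- is the delicate combinatorial bookkeeping at the heart of the proof.
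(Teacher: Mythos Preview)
Your reduction to a subgraph $H_0$ of minimum degree at least $a$ is fine, and so is the observation that passing from $H_i$ to $H_{i+1}=H_i[A_i\setminus B_i]$ decreases the minimum degree by at most $|A_i\cap B_i|\le c|V(H_i)|^{1-\delta}$. The gap is in the final accounting, and your proposed fix does not close it. Because $|V(H_{i+1})|\le |V(H_i)|/2$, the series $\sum_{j<k} c|V(H_j)|^{1-\delta}$ is dominated by its \emph{first} term, so the cumulative minimum-degree loss is $\Theta_\delta\bigl(c|V(H_0)|^{1-\delta}\bigr)$ regardless of where you stop. Combining the minimum-degree lower bound for $H_k$ with the trivial upper bound gives only
\[
a \;\le\; |V(H_k)| + \sum_{j<k}c|V(H_j)|^{1-\delta} \;\le\; \frac{|V(H_0)|}{2^k} + C_\delta\, c\,|V(H_0)|^{1-\delta},
\]
and no choice of $k$ removes the dependence on $|V(H_0)|$: the second summand is already fixed. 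The ``number of iterations needed to make the residual graph comparable in size to $a$'' is $\log(|V(H_0)|/a)$, not $\log(ea)$, and nothing in your setup ties $|V(H_0)|$ to $a$ beyond the trivial $|V(H_0)|>a$. In effect your argument proves only that a subgraph of minimum degree at least $a$ in a graph with \bal{c}{1-\delta}-separators must have $\Omega_\delta\bigl((a/c)^{1/(1-\delta)}\bigr)$ vertices---true, but not a contradiction.

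The paper's proof avoids this by tracking edge count rather than minimum degree and by recursing on \emph{both} sides of the separator. It proves by induction on $|V(G)|$ the strengthened inequality $|E(G)|\le a\bigl(|V(G)|-h(|V(G)|)\bigr)$ with slack $h(n)=n/(2\log en)$: a balanced separator $(A,B)$ of $G$ yields $|E(G)|\le |E(G[A])|+|E(G[B])|$, both pieces are handled by induction, and the double-counted vertices $|A|+|B|-|V(G)|=|A\cap B|\le cn^{1-\delta}$ are absorbed by the concavity defect $h(n/3)+h(2n/3)-h(n)\ge n/(4\log^2(en))$. The crucial difference is that every quantity here is controlled by the \emph{current} size $n$, so the defining equation $a^\delta=4c\log^2(ea)$ falls out directly; the $\log^2$ comes from the second-order behaviour of $n\mapsto n/\log(en)$, not from counting iterations.
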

\begin{proof}
Let $h(n)=\frac{n}{2\log en}$.  By induction on the number of vertices of $G$, we prove a stronger claim:  If $G$ has \bal{c}{1-\delta}-separators, then $G$ has at most $a_\delta(c)(|V(G)|-h(|V(G)|))$ edges.
Note that $a_\delta(c)(|V(G)|-h(|V(G)|))\ge a_\delta(c)|V(G)|/2$, and thus the claim trivially holds if $|V(G)|\le a_\delta(c)$.

Suppose that $|V(G)|> a_\delta(c)$.  Let $(A,B)$ be a balanced separator of $G$ of order at most $c|V(G)|^{1-\delta}$ and let $G_1=G[A]$ and $G_2=G[B]$.
Let $n=|V(G)|$, $n_0=|V(G_1\cap G_2)|$, $n_1=|V(G_1)\setminus V(G_2)|$ and $n_2=|V(G_2)\setminus V(G_1)|$.
Since $n>a_\delta(c)$, we have $n/3>cn^{1-\delta}\ge n_0$.
For $i\in\{1,2\}$, we have $|V(G_i)|=n_0+n_i<n/3+2n/3<n$, and thus $|E(G_i)|\le a_\delta(c)(n_0+n_i-h(n_0+n_i))$ by the induction hypothesis.
It follows that
\begin{align*}
|E(G)|&\le|E(G_1)|+|E(G_2)|\\
&\le a_\delta(c)(n+n_0-h(n_0+n_1)-h(n_0+n_2))\\
&=a_\delta(c)(n-h(n)+[n_0+h(n)-h(n_0+n_1)-h(n_0+n_2)]).
\end{align*}
Therefore, we need to prove that $n_0\le h(n_0+n_1)+h(n_0+n_2)-h(n)$.
Recall that $n_0\le cn^{1-\delta}$, $n_1,n_2\le 2n/3$, and $n=n_0+n_1+n_2> a_\delta(c)$.
Without loss of generality, $n_1\le n_2$. Since $h$ is increasing and concave for $n\ge 3$, and since $n_0+n_1=n-n_2\ge n/3\ge 3$, we have
$h(n_0+n_1)+h(n_0+n_2)\ge h(n/3)+h(2n/3+n_0)\ge h(n/3)+h(2n/3)$.
We conclude that
\begin{align*}
h(n_0+n_1)&{}+h(n_0+n_2)-h(n)\\
&\ge h(n/3)+h(2n/3)-h(n)\\
&=\frac{n}{6}\left(\frac{1}{\log(en)-\log(3)}+\frac{2}{\log(en)-\log(3/2)}-\frac{3}{\log(en)}\right)\\
&\ge\frac{n}{6\log^2 en}((\log(en)+\log(3))+2(\log(en)+\log(3/2))-3\log(en))\\
&\ge\frac{n}{4\log^2 en}=\frac{n^\delta}{4\log^2 en}n^{1-\delta}\ge\frac{a_\delta(c)^\delta}{4\log^2 (ea_\delta(c))}n^{1-\delta}\\
&=cn^{1-\delta}\ge n_0,
\end{align*}
as required.
\end{proof}
Let us remark that for a fixed $\delta>0$, we have $a_\delta(c)=O\bigl((c\log^3 c)^{1/\delta}\bigr)$.

Secondly, we aim to show that the density of bounded-depth minors of graphs with strongly sublinear separators
grows only slowly with the number of their vertices---slower than $n^\varepsilon$ for every $\varepsilon>0$.
Of course, we will eventually show that it is actually bounded by a constant, but we will need this auxiliary result to do so.

\begin{lemma}\label{lemma-subpolyden}
For any $c\ge 1$, $0<\delta\le 1$, $0<\varepsilon\le 1$ and $r\ge 1$, let $m=\left\lceil\frac{1}{2\varepsilon^2}\right\rceil$,
let $n_0$ satisfy Lemma~\ref{lemma-exexp}, let
$t$ be the smallest even integer greater than $\max\Bigl(n_0,(42000c4^mr)^{1/\delta}\Bigr)$ and
let $b_{c,\delta,\varepsilon}(r)=2\cdot 32^mt^4$. If $G$ has \bal{c}{1-\delta}-separators, then every $r$-minor $F$ of $G$ has less than
$b_{c,\delta,\varepsilon}(r)|V(F)|^{1+\varepsilon}$ edges.
\end{lemma}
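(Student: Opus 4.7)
The plan is to argue by contradiction. Suppose some $r$-minor $F$ of $G$ with $n := |V(F)|$ has $|E(F)| \geq b_{c,\delta,\varepsilon}(r)\, n^{1+\varepsilon} = 2\cdot 32^m t^4 n^{1+\varepsilon}$. The chosen $t$ is even, at least $n_0$, and comfortably at least $600$ (since $(42000c\,4^m r)^{1/\delta} \geq 168000$), so $F$ itself fits the hypothesis of Lemma~\ref{lemma-hdtw} and therefore contains a subgraph $H$ with $\Delta(H) \leq 3$, $|V(H)| \leq 4^{m+1} t$, and $\tw(H) \geq t/25$. The goal is then to lift $H$ back to a subgraph $H'$ of $G$ that has $H$ as a minor, with $|V(H')|$ small enough that Corollary~\ref{cor-septotw} forces $\tw(H') < t/25$, contradicting $\tw(H) \leq \tw(H')$.

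For the lift, each vertex $v \in V(F)$ is the contraction of a pairwise-disjoint connected subgraph $S_v \subseteq G$ of radius at most $r$, and for each edge $uv \in E(H) \subseteq E(F)$ we may fix one edge $e_{uv}$ of $G$ between $S_u$ and $S_v$. The crucial feature supplied by Lemma~\ref{lemma-hdtw} is that $H$ has maximum degree at most $3$, so each $S_v$ is touched by at most three edges $e_{uv}$. Fix a center $c_v$ of $S_v$, take a BFS tree of $S_v$ rooted at $c_v$, and let $\tilde S_v$ be its minimal subtree containing $c_v$ together with the (at most three) endpoints of the $e_{uv}$ that lie in $S_v$. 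Each such endpoint is within distance $r$ of $c_v$, so $\tilde S_v$ is a union of at most three paths of length $\leq r$ sharing the root and therefore has at most $3r+1$ vertices. Letting $H'$ be the union of the subgraphs $\tilde S_v$ together with the selected edges $e_{uv}$, we obtain a subgraph of $G$ with $H$ as an $r$-minor and $|V(H')| \leq (3r+1)|V(H)| \leq 16\cdot 4^m r t$.

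The rest is a short computation. By Corollary~\ref{cor-septotw}, $\tw(H') \leq 105c\,|V(H')|^{1-\delta}$, and since $16\cdot 4^m r \geq 1$ we have $(16\cdot 4^m r t)^{1-\delta} \leq (16\cdot 4^m r)\, t^{1-\delta}$. The chain $t/25 \leq \tw(H) \leq \tw(H') \leq 105c \cdot 16\cdot 4^m r \cdot t^{1-\delta}$ rearranges to $t^\delta \leq 42000\, c\, 4^m r$, contradicting the choice $t > (42000c\,4^m r)^{1/\delta}$. The main obstacle is the lifting step: the bounded-size tree-width witness $H$ must be pulled back into $G$ with only an $O(r)$-factor blow-up in vertex count. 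This is precisely why Lemma~\ref{lemma-hdtw} was stated to yield a max-degree-$3$ witness rather than a dense subgraph or clique minor---the bounded maximum degree of $H$ is what bounds each Steiner-tree piece $\tilde S_v$ by $3r+1$ rather than by something that would grow with $|V(H)|$, and it is also what drives the explicit constant $42000 = 25 \cdot 105 \cdot 16$ appearing in the definition of $t$.
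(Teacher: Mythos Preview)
Your proof is correct and follows essentially the same approach as the paper's: apply Lemma~\ref{lemma-hdtw} to $F$, lift the resulting bounded-degree subgraph $H$ back to a subgraph $H'\subseteq G$ with $|V(H')|\le(3r+1)|V(H)|\le 4^{m+2}rt$, and derive a contradiction from Corollary~\ref{cor-septotw}. You spell out the lifting construction (the Steiner subtrees $\tilde S_v$) and the verification that $t\ge 600$ in more detail than the paper, which simply asserts the existence of $H'$, but the argument and the arithmetic $25\cdot 105\cdot 16=42000$ are identical.
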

\begin{proof}
Suppose that $F$ has at least $b_{c,\delta,\varepsilon}(r)|V(F)|^{1+\varepsilon}$ edges.
By Lemma~\ref{lemma-hdtw}, $F$ contains a subgraph $H$ of maximum degree $3$ with $|V(H)|\le 4^{m+1}t$ and with
$\tw(H)\ge \frac{t}{25}$.  Hence, $G$ contains a subgraph $H'$ of maximum degree $3$ with $|V(H')|\le (3r+1)|V(H)|\le 4^{m+2}rt$
such that $H$ is a minor of $H'$, and thus $\tw(H')\ge \tw(H)\ge \frac{t}{25}$.  By Corollary~\ref{cor-septotw}, we have $\tw(H')\le 105c|V(H')|^{1-\delta}$.
Therefore,
\begin{align*}
\frac{t}{25}&\le 105c4^{m+2}rt^{1-\delta}\\
t^\delta&\le 42000c4^mr.
\end{align*}
This contradicts the choice of $t$.
\end{proof}

Let us remark that for fixed $c$, $\delta$, and $\varepsilon$, we have $b_{c,\delta,\varepsilon}(r)=O\bigl(r^{4/\delta}\bigr)$.

\section{Sublinear separators in bounded-depth minors}\label{sec-sep}

We now prove that if $G$ has strongly sublinear separators, then any bounded-depth minor of $G$ also has strongly sublinear
separators.  Together with Lemma~\ref{lemma-dens}, this will give the bound on their density.

\begin{lemma}\label{lemma-sepmin}
For any $c\ge 1$, $0<\delta\le 1$ and $r\ge 1$, let $\varepsilon=\min\left(1,\frac{\delta}{6(1-\delta)}\right)$ and
let $p_{c,\delta}(r)=316c(b_{c,\delta,\varepsilon}(r)r)^{1-\delta}$.  If $G$ has \bal{c}{1-\delta}-separators, then
every $r$-minor of $G$ has \bal{p_{c,\delta}(r)}{1-\frac{5}{6}\delta}-separators.
\end{lemma}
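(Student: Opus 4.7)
It suffices to fix an arbitrary subgraph $F'$ of an $r$-minor $F$ of $G$, set $n':=|V(F')|$, and produce a balanced separator of $F'$ of order at most $p_{c,\delta}(r)(n')^{1-5\delta/6}$. First I would observe that $F'$ is itself an $r$-minor of $G$ (delete the branch sets corresponding to $V(F)\setminus V(F')$ and the extra edges), so Lemma~\ref{lemma-subpolyden} applies and yields $|E(F')|\le b(n')^{1+\varepsilon}$, where $b:=b_{c,\delta,\varepsilon}(r)$.

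The plan is to build a witness subgraph $H'\subseteq G$ that realizes $F'$ as a minor and has $|V(H')|$ only slightly superlinear in $n'$. For each $v\in V(F')$ let $G_v$ be the branch set realizing $v$ in $G$, and choose a center $c_v\in V(G_v)$ within distance $r$ of every vertex of $G_v$. For each edge $uv\in E(F')$, combine an edge of $G$ between $V(G_u)$ and $V(G_v)$ with shortest paths from its endpoints to $c_u$ and $c_v$ inside the respective branch sets, producing a path $P_{uv}$ in $G$ from $c_u$ to $c_v$ of length at most $2r+1$. Let $H'$ be the subgraph of $G$ with vertex set $\{c_v:v\in V(F')\}\cup\bigcup_{uv\in E(F')}V(P_{uv})$ and the corresponding induced edges, and set $H'_v:=V(H')\cap V(G_v)$. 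Each $H'_v$ contains $c_v$ and is connected in $H'$, and the bridging edge of each $P_{uv}$ joins $H'_u$ to $H'_v$, so contracting the sets $H'_v$ exhibits $F'$ as a minor of $H'$. A routine count gives $|V(H')|\le n'+2r|E(F')|\le 3rb(n')^{1+\varepsilon}$.

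Since $H'\subseteq G$, Corollary~\ref{cor-septotw} yields $\tw(H')\le 105c|V(H')|^{1-\delta}$. The key move is to take a weighted centroid separator in a tree decomposition of $H'$: I would assign weight $1$ to each representative $c_v$ and weight $0$ to every other vertex of $H'$, so that the total weight is exactly $n'$. A standard centroid argument in the tree decomposition then produces a separator $(A,B)$ of $H'$ with $|A\cap B|\le \tw(H')+1$ and with each of $A\setminus B$, $B\setminus A$ containing at most $2n'/3$ representatives. Let $S:=\{v\in V(F'):H'_v\cap A\cap B\ne\emptyset\}$; by connectivity of $H'_v$ in $H'$, any $v\notin S$ has $H'_v$ entirely inside one of $A\setminus B$ or $B\setminus A$, so such a $v$ falls into a class $V_A$ or $V_B$. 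The weighted balance forces $|V_A|,|V_B|\le 2n'/3$, and any $F'$-edge between $V_A$ and $V_B$ would force the corresponding path $P_{uv}\subseteq H'_u\cup H'_v$ to cross from $A\setminus B$ to $B\setminus A$, contradicting that $(A,B)$ is a separator of $H'$. Thus $(V_A\cup S,V_B\cup S)$ is a balanced separator of $F'$ of order $|S|\le \tw(H')+1\le 105c(3rb)^{1-\delta}(n')^{(1+\varepsilon)(1-\delta)}+1$. The choice $\varepsilon=\min(1,\delta/(6(1-\delta)))$ is calibrated precisely so that $(1+\varepsilon)(1-\delta)\le 1-5\delta/6$, and absorbing the additive $1$ into the constant (using $c\ge 1$) yields the stated bound $316c(br)^{1-\delta}(n')^{1-5\delta/6}$.

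The main obstacle is the mismatch between $|V(H')|$ and $n'$: a direct application of \bal{c}{1-\delta}-separators to $H'$ only gives balance in $|V(H')|$, which can be much larger than $n'$ when branch sets are large, and so need not translate to balance in the number of $F'$-vertices on either side. Routing through the tree-width bound of Corollary~\ref{cor-septotw} to extract a weighted centroid separator balanced with respect to the one-per-bag weight is exactly what recovers the right balance for $F'$, at the cost of the constant factor $105$ and of the mild loss $\delta\mapsto 5\delta/6$ in the exponent controlled by $\varepsilon$.
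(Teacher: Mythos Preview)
Your argument is correct and follows the paper's approach almost exactly: reduce to a single $r$-minor $F'$, bound $|E(F')|$ via Lemma~\ref{lemma-subpolyden}, realize $F'$ as a minor of a witness $H'\subseteq G$ with $|V(H')|\le 3b_{c,\delta,\varepsilon}(r)\,r\,(n')^{1+\varepsilon}$, and invoke Corollary~\ref{cor-septotw} to bound $\tw(H')$.

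The only difference is in the last step, where you work harder than necessary. You run a weighted centroid argument in a tree decomposition of $H'$ and then project the resulting separator back to $F'$. The paper instead observes that tree-width is monotone under taking minors: since $F'$ is a minor of $H'$, one has $\tw(F')\le\tw(H')$, and Lemma~\ref{lemma-wtsep} applied directly to $F'$ yields a balanced separator of $F'$ of order at most $\tw(H')+1$. This gives the same bound with no weighted argument and no projection. Your route is not wrong---it is essentially a hands-on proof of the special case of minor-monotonicity you need, fused with the proof of Lemma~\ref{lemma-wtsep}---but the one-line appeal to $\tw(F')\le\tw(H')$ is cleaner and makes the ``main obstacle'' you flag disappear entirely.
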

\begin{proof}
Let $H$ be an $r$-minor of $G$.  Since every subgraph of $H$ is also an $r$-minor of $G$,
it suffices to prove that $H$ has a balanced separator of order at most
$p_{c,\delta}(r)|V(H)|^{1-\frac{5}{6}\delta}$.

By Lemma~\ref{lemma-subpolyden}, $H$ has at most $b_{c,\delta,\varepsilon}(r)|V(H)|^{1+\varepsilon}$ edges.
Hence, there exist a subgraph $H'$ of $G$ with at most $2b_{c,\delta,\varepsilon}(r)r|V(H)|^{1+\varepsilon}+|V(H)|\le 3b_{c,\delta,\varepsilon}(r)r|V(H)|^{1+\varepsilon}$
vertices such that $H$ is a minor of $H'$.  By Corollary~\ref{cor-septotw}, we have
$$\tw(H')\le 315c\left(b_{c,\delta,\varepsilon}(r)r|V(H)|^{1+\varepsilon}\right)^{1-\delta}\le 315c(b_{c,\delta,\varepsilon}(r)r)^{1-\delta}|V(H)|^{1-\frac{5}{6}\delta}.$$
Since $H$ is a minor of $H'$, we have $\tw(H)\le \tw(H')$.
Therefore, by Lemma~\ref{lemma-wtsep}, $H$ has a balanced separator of order at most
$$\tw(H')+1\le 315c(b_{c,\delta,\varepsilon}(r)r)^{1-\delta}|V(H)|^{1-\frac{5}{6}\delta}+1\le p_{c,\delta}(r)|V(H)|^{1-\frac{5}{6}\delta}$$
as required.
\end{proof}

Let us remark that for fixed $c$ and $\delta$, we have $p_{c,\delta}(r)=O\bigl(r^{(4/\delta+1)(1-\delta)}\bigr)=O\bigl(r^{4/\delta}\bigr)$.

\section{Polynomial expansion}\label{sec-poly}

Finally, we can prove our main result.

\begin{proof}[Proof of Theorem~\ref{thm-expansion}]
For any $r\ge 1$, every $r$-minor of $G$ has \bal{p_{c,\delta}(r)}{1-\frac{5}{6}\delta}-separators by Lemma~\ref{lemma-sepmin},
where $p_{c,\delta}(r)=O(r^{4/\delta})$.  By Lemma~\ref{lemma-dens}, every $r$-minor of $G$
has edge density at most $a_{\frac{5}{6}\delta}\bigl(p_{c,\delta}(r)\bigr)=O\bigl(p_{c,\delta}(r)^{\frac{5}{4\delta}}\bigr)=O\bigl(r^{5/\delta^2}\bigr)$.
Therefore, $\nabla_r(G)\le O\bigl(r^{5/\delta^2}\bigr)$.
\end{proof}

\bibliographystyle{siam}
\bibliography{twd}

\end{document}